\theoremstyle{plain}
\newtheorem{theorem}{Theorem}[section]
\newtheorem*{theorem*}{Theorem}
\newtheorem{corollary}[theorem]{Corollary}
\theoremstyle{definition}
\newtheorem{remark}[theorem]{Remark}
\newcommand{\enm}[1]{\ensuremath{#1}}          %
\newcommand{\cal}[1]{\mathcal{#1}}
\newcommand{\CC}{\enm{\mathbb{C}}}
\newcommand{\II}{\enm{\mathbb{I}}}
\newcommand{\RR}{\enm{\mathbb{R}}}
\newcommand{\PP}{\enm{\mathbb{P}}}
\newcommand{\KK}{\enm{\mathbb{K}}}
\newcommand{\Oo}{\enm{\cal{O}}}
\newcommand{\Uu}{\enm{\cal{U}}}
\renewcommand{\phi}{\varphi}
\renewcommand{\theta}{\vartheta}
\renewcommand{\epsilon}{\varepsilon}
\renewcommand{\to}[1][]{\xrightarrow{\ #1\ }}
\newcommand{\old}[1]{}
\date{}
\begin{document}

\title[Hadamard product]
{Projective surfaces not as Hadamard products and the dimensions of the Hadamard joins}
\author{Edoardo Ballico}
\address{Dept. of Mathematics\\
 University of Trento\\
38123 Povo (TN), Italy\\
https://orcid.org/0000-0002-1432-7413}
\email{edoardo.ballico@unitn.it}
\thanks{The author is a member of GNSAGA of INdAM (Italy).}
\subjclass{14N05;14N07;14M99}
\keywords{Hadamard product; projective surfaces; join}

\begin{abstract}
We study the dimensions of Hadamard products of $k\ge 3$ varieties if we allow to modify $k-1$ of them by the action of a general projective linear transformation.
We also prove that the join of a variety not contained in a coordinate hyperplane with a ``nice'' curve always has the expected dimension.
\end{abstract}

\maketitle

\section{Introduction}
Let $\PP^n$ be a $n$-dimensional complex projective space. We fix a system of homogeneous system $x_0,\dots ,x_n$ of coordinates of $\PP^n$. Fix $P=[p_0:\dots :q_n]\in \PP^n$,
$Q= [q_0:\dots :q_n]\in \PP^n$ such that $p_iq_i\ne 0$ for at least one $i\in \{0,\dots ,n\}$, then the Hadamard product $P\star Q$ of $P$ and $Q$ is the point $P\star Q= [p_0q_0,\dots ,p_nq_n]\in \PP^n$. If $p_iq_i=0$ for all $i$, then $P\star Q$ is not defined or at least it is a symbol, $\infty$ or $\emptyset$, not a point of $\PP^n$.
Let $X$ and $Y$  be irreducible subvarieties of $\PP^n$ such that there are at least one $P\in X$ and $Q\in Y$ with $P\star Q$  well-defined. With this assumption
$P_1\star Q_1$ is well-defined for a non-empty Zariski open subset $U$ of $X\times Y$. The Hadamard product $X\star Y$ is the closure in $\PP^n$ of the union of all $P_1\star Q_1$ with $(P_1,Q_1)\in U$.
The Hadamard product  $X\star Y$ is the irreducible variety of dimension at most $\min \{n,\dim X+\dim Y\}$ and an important question is giving conditions on $X$ and $Y$ such that
$\dim X\star Y =\min \{n,\dim X+\dim Y\}$. In the same way or using the associative law and induction we prove the Hadamard product of $X_1\star \cdots \star X_k$ of $k\ge 3$ varieties assuming the existence of $P_j= [p_{0j}:\dots :p_{nj}]\in X_j$, $1\le j\le k$, and $i\in \{0,\dots ,n\}$ such that $\prod_{j=1}^{k} p_{ij}\ne 0$, i.e. such that $P_1\star \cdots \star P_k$ is well-defined.

The Hadamard product  of $2$ points of $\PP^n$ depends on the fixed coordinate space and all related notions are not invariants for the automorphism group $PGL(n+1)$ of $\PP^n$. Hence the Hadamard products is not invariant for change of coordinates. For instance there are linear spaces $V_1,V_2$ such that $V_1\star V_2 =\emptyset$, while $\dim g(V_1)\star g(V_2)$ has dimension $\dim V_1+\dim V_2$ for almost all $g\in PGL(V)$.
This is the main difficulty on this topic which was born for strict applied reasons (\cite{CMS,CTY}) and continue with  several different tools, often related to Commutative Algebra, Projective Algebraic Geometry  and Tropical Geometry (which is an important tool) (\cite{MS15}, \cite[\S 2.5]{BC24}). Recently, C. Bocci and E. Carlini published a book on these topics containing a description of the main strands and a long bibliography (\cite{BC24}). One of the result of this note is a negative answer to a question raised in this book (\cite[Research Question 12.1.5]{BC24}) (see Theorem \ref{i4}). 

Our main results are an attempt to being less vague and less wasteful when in algebro-geometric jargon one says ``take general varieties'' (among a fixed class of varieties). With a genericity assumption several strong results on the joins $X_1\star \cdots \star X_k$ of varieties are known, but often they require $n$ very large, of the order of the product of the dimensions $\dim X_i$, $1\le i\le k$ (\cite[\S 6.1]{BC24}). 

To state our results we need the following standard notation. Set $H_i:= \{x_i=0\}$, $i=0,\dots ,n$. Set $\Delta_{n-1}:= H_0\cup \cdots \cup H_n$. Note that if $P\in (\PP^n\setminus \Delta_{n-1})$, then $P\star Q$ is well-defined for all $Q\in \PP^n$. Let $PGL(n+1)$ be the quotient of the linear group $GL(n+1)$ by the non-zero multiples of the identity matrix. We have $\mathrm{Aut}(\PP^n) =PGL(n+1)$.

Fix varieties $X, Y\subset \PP^n$. Instead of asking that the pair $(X,Y)$ is general in a prescribed class of pairs of projective varieties, we fix one such pair $(X,Y)$, even very special,
and we compute $\dim g(X)\star Y$ for a general $P\in PGL(n+1)$.
We prove the following results.

\begin{theorem}\label{i2}
Fix integral varieties $X_i\subset \PP^n$, $1\le i\le k$, $k\ge 2$, such that $X_k\nsubseteq \Delta_{n-1}$. Take a general $(g_1,\dots ,g_{k-1})\in PGL(n+1)^{k-1}$.
Then $$\dim g_1(X_1)\star\cdots \star g_{k-1}(X_{k-1})\star X_k =\min \{n,\dim X_1+\cdots +\dim X_k\}.$$\end{theorem}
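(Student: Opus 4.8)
The plan is to compute $\dim\big(g_1(X_1)\star\cdots\star g_{k-1}(X_{k-1})\star X_k\big)$ as the generic rank of the differential of the multiplication map, and then to reinterpret that rank as the dimension of a sum of linear subspaces that a general $(g_1,\dots,g_{k-1})$ forces into general position. Write $d_j=\dim X_j$ and let $\widehat X_j\subset\CC^{n+1}$ be the affine cone over $X_j$. Consider the rational map $\Phi\colon g_1(\widehat X_1)\times\cdots\times g_{k-1}(\widehat X_{k-1})\times \widehat X_k\dashrightarrow\CC^{n+1}$ given by coordinatewise multiplication, whose projectivized image is the Hadamard product. Since $X_k\nsubseteq\Delta_{n-1}$, a general point of $X_k$ lies off $\Delta_{n-1}$, and for general $g_j$ a general point of $g_j(X_j)$ lies off $\Delta_{n-1}$ as well; hence at a general point all the chosen lifts $v^{(j)}$ and the product $r=\Phi(\dots)$ have all coordinates nonzero, so $\Phi$ is defined there and, in characteristic $0$, $\dim(\text{Hadamard product})=\operatorname{rank} d\Phi-1$.

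A direct product-rule computation gives, for tangent vectors $w^{(j)}$ to the $j$-th factor,
\[
d\Phi(w^{(1)},\dots,w^{(k)})_i=\sum_{j=1}^k w^{(j)}_i\prod_{l\ne j} v^{(l)}_i = r_i\sum_{j=1}^k \frac{w^{(j)}_i}{v^{(j)}_i}\qquad(0\le i\le n),
\]
so $d\Phi(w^{(1)},\dots,w^{(k)})=D_r\sum_{j=1}^k D_{v^{(j)}}^{-1}w^{(j)}$, where $D_u=\operatorname{diag}(u_0,\dots,u_n)$ and the rewriting uses $r_i\ne0$. Since $D_r$ is invertible, the image of $d\Phi$ equals $D_r\big(\sum_{j=1}^k W_j\big)$ with $W_j:=D_{g_jv^{(j)}}^{-1}\,g_j\big(\widehat T_{v^{(j)}}\widehat X_j\big)$ for $j<k$ and $W_k:=D_{v^{(k)}}^{-1}\widehat T_{v^{(k)}}\widehat X_k$, whence $\operatorname{rank} d\Phi=\dim\sum_{j=1}^k W_j$. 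Each $\widehat T_{v^{(j)}}\widehat X_j$ contains the Euler vector $v^{(j)}$, so $W_j$ contains $D_{g_jv^{(j)}}^{-1}(g_jv^{(j)})=\mathbf 1$, the all-ones vector; thus the $W_j$ share the line $\CC\mathbf 1$. Writing $\overline W_j$ for the image of $W_j$ in $\CC^{n}:=\CC^{n+1}/\CC\mathbf 1$, we have $\dim\overline W_j=d_j$ and $\dim\sum_jW_j=1+\dim\sum_j\overline W_j$, so that $\dim(\text{Hadamard product})=\dim\sum_{j=1}^k\overline W_j$. The theorem reduces to showing that for general $g_j$ one has $\dim\sum_{j=1}^k\overline W_j=\min\{n,\sum_j d_j\}$.

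The crux is to show that varying $g_j$ makes $\overline W_j$ an arbitrary $d_j$-dimensional subspace of $\CC^n$, for each $j<k$, independently. Since $GL(n+1)$ acts transitively on pairs $(w,V)$ consisting of a nonzero vector $w$ and a $(d_j+1)$-dimensional subspace $V\ni w$, the pair $\big(g_jv^{(j)},\,g_j(\widehat T_{v^{(j)}}\widehat X_j)\big)$ realizes every such pair as $g_j$ ranges over $PGL(n+1)$. Fixing $w=g_jv^{(j)}$ with nonzero coordinates and letting $V$ vary over $(d_j+1)$-subspaces containing $w$, the quotient $V/\CC w$ runs over all $d_j$-subspaces of $\CC^{n+1}/\CC w$; applying the isomorphism $D_w^{-1}$, which sends $w$ to $\mathbf 1$, shows $\overline W_j=D_w^{-1}V/\CC\mathbf 1$ runs over all $d_j$-subspaces of $\CC^n$. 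Hence the map $PGL(n+1)^{k-1}\to\prod_{j<k}\mathrm{Gr}(d_j,n)$, $(g_1,\dots,g_{k-1})\mapsto(\overline W_1,\dots,\overline W_{k-1})$, is dominant.

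It remains to note that a general tuple $(\overline W_1,\dots,\overline W_{k-1})$, together with the fixed subspace $\overline W_k$, satisfies $\dim\sum_{j=1}^k\overline W_j=\min\{n,\sum_jd_j\}$: a general subspace is in general position with respect to any fixed configuration, so inductively the partial sums grow by the maximal amount until they exhaust $\CC^n$. Concretely, the locus in $\prod_{j<k}\mathrm{Gr}(d_j,n)$ where this maximal dimension is attained is open, and it is nonempty by a greedy choice of the $\overline W_j$; pulling back along the dominant map above and using lower semicontinuity of fibre dimension, the maximum is attained for general $(g_1,\dots,g_{k-1})$, which gives the asserted equality. I expect the only genuinely delicate point to be the decoupling in the third paragraph: one must verify that the twist $D_{g_jv^{(j)}}^{-1}$ does not constrain $\overline W_j$, which is exactly what the observation $D_w^{-1}w=\mathbf 1$ resolves.
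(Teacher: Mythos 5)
Your proof is correct, and at its core it runs on the same engine as the paper's: a Terracini-type computation of the tangent space at a general point, followed by the observation that the stabilizer in $PGL(n+1)$ of a point acts transitively on the subspaces through that point, so a general $g_j$ turns the translated tangent space into a general subspace through the (fixed) image point. The differences are organizational but real. The paper first proves $k=2$, quoting the tangent-space description of a Hadamard product at a general point (\cite[Lemma 1.6]{BC24}: the span of $T_{g(P)}g(X_1)\star Q$ and $g(P)\star T_QX_2$), and then reaches general $k$ by induction, re-entering the $k=2$ case with $W:=g_2(X_2)\star\cdots\star X_k\nsubseteq \Delta_{n-1}$. You instead differentiate the $k$-fold coordinatewise multiplication on affine cones directly, and your normalization by $D_r^{-1}$ and the Hadamard inverses $D_w^{-1}$ --- under which every Euler vector becomes $\mathbf{1}$ --- replaces the induction by a single general-position statement for $k$ subspaces of $\CC^{n+1}/\CC\mathbf{1}$. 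This buys two things: the tangent-space lemma is re-derived from the product rule rather than cited, and all $k$ factors are decoupled simultaneously inside one fixed ambient space, which makes the genericity bookkeeping transparent (one open dense condition per $g_j$, independently, with the hypothesis $X_k\nsubseteq\Delta_{n-1}$ entering exactly once, to get $v^{(k)}$ off the coordinate hyperplanes). Two cosmetic points, neither affecting correctness: in your displayed formula for $d\Phi$ the $v^{(l)}$ with $l<k$ must be read as the coordinates of the translated points $g_lv^{(l)}$ (your subsequent definitions of the $W_j$ do this correctly); and the closing appeal to semicontinuity of fibre dimension is superfluous, since lower semicontinuity of $\operatorname{rank} d\Phi$ at your one chosen point, combined with the trivial upper bound $\dim\le\min\{n,\sum_j \dim X_j\}$, already gives the claim for general $(g_1,\dots,g_{k-1})$.
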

Note that the assumption $X_k\nsubseteq \Delta_{n-1}$ is essential, because if $X_k\subseteq H_i$, then $W\star X_k\subseteq H_i$ for all integral varieties $W$. 

As an immediate corollary of Theorem \ref{i2} we get the following result.

\begin{theorem}\label{i3}
Fix integral varieties $X_i\subset \PP^n$, $1\le i\le k$, $k\ge 2$. Take a general $(g_1,\dots ,g_k)\in PGL(n+1)^k$.
Then $$\dim g_1(X_1)\star\cdots \star g_k(X_k) =\min \{n,\dim X_1+\cdots +\dim X_k\}.$$\end{theorem}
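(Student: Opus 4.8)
The plan is to deduce Theorem~\ref{i3} from Theorem~\ref{i2} by spending one of the $k$ transformations to push $X_k$ off the coordinate locus $\Delta_{n-1}$, and then invoking Theorem~\ref{i2} for the remaining $k-1$ transformations. First I would check that a general $g_k\in PGL(n+1)$ satisfies $g_k(X_k)\nsubseteq\Delta_{n-1}$. Since $X_k$ is integral, so is $g_k(X_k)$, and hence $g_k(X_k)\subseteq\Delta_{n-1}$ would force $g_k(X_k)\subseteq H_i$ for some $i$. For each $i$ the set $\{g:g(X_k)\subseteq H_i\}=\bigcap_{p\in X_k}\{g:g(p)\in H_i\}$ is closed, and it is proper because it is contained in $\{g:g(p_0)\in H_i\}$ for a chosen $p_0\in X_k$, which is a proper subset as some $g$ carries $p_0$ off $H_i$. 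The complement $W$ of the union of these $n+1$ proper closed sets is therefore a nonempty open subset of $PGL(n+1)$ along which $g_k(X_k)\nsubseteq\Delta_{n-1}$.

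Next I would fix one $g_k\in W$ and put $X_k':=g_k(X_k)$, an integral variety with $X_k'\nsubseteq\Delta_{n-1}$ and $\dim X_k'=\dim X_k$. Theorem~\ref{i2}, applied to $X_1,\dots,X_{k-1},X_k'$, then yields a general $(g_1,\dots,g_{k-1})$ with
\[
\dim g_1(X_1)\star\cdots\star g_{k-1}(X_{k-1})\star g_k(X_k)=\min\{n,\dim X_1+\cdots+\dim X_k\}.
\]
In particular the locus $S\subseteq PGL(n+1)^k$ of tuples realizing this maximal value is nonempty.

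The only genuine point is to upgrade this to a statement about a general tuple of the whole product $PGL(n+1)^k$, since the genericity furnished by Theorem~\ref{i2} a priori depends on the chosen $g_k$; I expect this to be the main obstacle, and I would resolve it by showing that $S$ is open. The number $\min\{n,\dim X_1+\cdots+\dim X_k\}$ is the largest value $\dim g_1(X_1)\star\cdots\star g_k(X_k)$ can take, and for each tuple this dimension equals the generic rank of the differential of the rational map $\mu_g\colon X_1\times\cdots\times X_k\dashrightarrow\PP^n$, $(p_1,\dots,p_k)\mapsto g_1(p_1)\star\cdots\star g_k(p_k)$ (defined wherever the Hadamard product makes sense), and these maps form one family over $PGL(n+1)^k$. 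Since the rank of the differential is lower semicontinuous, the locus $\{\op{rank}\ge d\}$ is open in $PGL(n+1)^k\times X_1\times\cdots\times X_k$, and as the projection $PGL(n+1)^k\times X_1\times\cdots\times X_k\to PGL(n+1)^k$ is open, each locus $\{\,(g_i)_i:\dim g_1(X_1)\star\cdots\star g_k(X_k)\ge d\,\}$ is open. Hence $S$, the locus where this dimension is maximal, is open; being open and nonempty in the irreducible variety $PGL(n+1)^k$, it is dense, so a general tuple $(g_1,\dots,g_k)$ lies in $S$, which is exactly the assertion of Theorem~\ref{i3}. The only real work is the semicontinuity input; everything else is formal.
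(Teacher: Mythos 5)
Your proof is correct and takes essentially the same route as the paper, whose entire argument is the one-line observation that $g_k(X_k)\nsubseteq \Delta_{n-1}$ for a general $g_k$, after which Theorem~\ref{i2} applies. The extra semicontinuity/openness argument you give correctly fills in the point the paper leaves implicit --- namely that the general locus of $(g_1,\dots,g_{k-1})$ furnished by Theorem~\ref{i2} a priori depends on $g_k$, so one needs the locus of tuples achieving the maximal dimension to be open (or at least dense constructible) in $PGL(n+1)^k$ --- but this is a refinement of a detail rather than a different approach.
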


The obvious inspiration for statements similar to Theorems \ref{i2} and \ref{i3} came to us from the notions of generic initial ideals and universal Gr\"{o}bner bases (\cite{AL94}, \cite[\S 15.9]{eis}).

In the next statement ``very general'' means ``outside countably many proper Zariski closed subsets of the $(\binom{d+3}{3}-1)$-dimensional projective space parametrizing the set of all degree $d$ surfaces of $\PP^3$''.

\begin{theorem}\label{i4}
Let $W\subset \PP^3$ be a very general surface   of degree $d\ge 4$. There are no curves $X,Y\subset \PP^3$ such that $W =X\star Y$.
\end{theorem}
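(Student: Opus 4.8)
The plan is to use the Noether--Lefschetz theorem to pin down the numerical type of a hypothetical factorization and then to derive a contradiction from the way the factors sweep out $W$. First I reduce to $X,Y$ irreducible: if $X=\bigcup_i X_i$ then $X\star Y=\bigcup_i(X_i\star Y)$, and since $W$ is irreducible one summand already equals $W$; the same applies to $Y$. Neither factor can lie in $\Delta_2:=H_0\cup\dots\cup H_3$, since $X\subseteq H_i$ would force $W=X\star Y\subseteq H_i$, contrary to $W$ being a surface not contained in a hyperplane. Hence a general $P=[p_0:\dots:p_3]\in X$ has all $p_i\ne0$, the diagonal matrix $g_P=\mathrm{diag}(p_0,\dots,p_3)$ lies in $PGL(4)$, and $\{P\}\star Y=g_P(Y)$ is a curve on $W$ projectively equivalent to $Y$; these curves cover $W$ as $P$ varies, and symmetrically $g_Q(X)=X\star\{Q\}$.

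Since $W$ is very general of degree $d\ge4$, Noether--Lefschetz gives $\Pic(W)=\ZZ\,[\Oo_W(1)]$. The effective curve $g_P(Y)$ therefore has class $m_2[\Oo_W(1)]$ with $m_2\ge1$, so $\deg Y=\deg g_P(Y)=g_P(Y)\cdot[\Oo_W(1)]=m_2d$; likewise $\deg X=m_1d$. I then bound $m_2$ from above. Fixing a second general point $P'\in X$ and writing $s=g_Pg_{P'}^{-1}\in T$ (the diagonal torus, with $s\ne e$ since $P\ne P'$ in $\PP^3$), the inclusions $g_P(Y)\subseteq W$ and $g_P(Y)=g_Pg_{P'}^{-1}\bigl(g_{P'}(Y)\bigr)\subseteq sW$ give $g_P(Y)\subseteq W\cap sW$, a curve of degree $d^2$; hence $m_2d\le d^2$, i.e.\ $1\le m_2\le d$, and symmetrically $1\le m_1\le d$. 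Only finitely many numerical types survive.

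The contradiction is then read off from the curve $W\cap sW$. For very general $W$ this intersection should be an \emph{irreducible} curve of degree $d^2$ for every $s\in T$ with $sW\ne W$. Granting this, $g_P(Y)$—an irreducible curve of degree $m_2d\le d^2$ contained in $W\cap sW$—must equal $W\cap sW$; when $m_2<d$ this is absurd on degrees, and when $m_2=d$ it forces the divisor $sW|_W\in|\Oo_W(d)|$ to be independent of $s$ along the curve $\Gamma=\{g_Pg_{P'}^{-1}:P'\in X\}\subseteq T$. The latter is impossible because $s\mapsto sW$ is a non-degenerate curve in $|\Oo_{\PP^3}(d)|=\PP^N$ with $N=\binom{d+3}{3}-1$ (its image lies on the $d$-uple Veronese of $\PP^3$, which contains no lines once $d\ge2$, so no three translates are collinear), while the restriction $|\Oo_{\PP^3}(d)|\dashrightarrow|\Oo_W(d)|$ is linear with center the single point $[W]$; hence $sW|_W$ genuinely varies. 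Either way the factorization is contradicted.

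The hard part will be the irreducibility statement for $W\cap sW$, or rather the slightly weaker assertion actually needed: that for very general $W$ the moving curve $W\cap sW$ has no $s$-independent component as $s$ runs over the curve $\Gamma$. This is where the ``very general'' hypothesis must be used a second time, and where a genuine argument is required—most naturally a dimension count showing that the degree $d$ surfaces admitting a one-parameter family of torus translates with a fixed common curve of class at most $d[\Oo_W(1)]$ form a proper (indeed positive-codimension) subvariety of $\PP^N$, so that a very general $W$ avoids the finitely many relevant loci. Making this collapse quantitative, using that such a common curve would have degree $m_2d$ and the adjunction-determined genus $1+\tfrac12 m_2(m_2+d-4)d$ on $W$, is the crux of the proof.
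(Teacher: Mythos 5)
Your opening moves match the paper's: neither factor can lie in $\Delta_2$, and Noether--Lefschetz on the very general $W$ forces every curve on $W$ to have class a multiple of $\Oo_W(1)$, so $\deg X=m_1d$, $\deg Y=m_2d$. Your bound $m_2\le d$ via $g_P(Y)\subseteq W\cap sW$ is a nice observation, and your $m_2=d$ argument is essentially sound (one small repair: the orbit $\{[F\circ s^{-1}]\}$ does not lie on the Veronese itself but on its image under the linear automorphism $\mathrm{diag}(c_\alpha)$ of $\PP^N$, which is invertible because a very general $F$ has all coefficients $c_\alpha\ne 0$; since linear automorphisms preserve lines and the Veronese contains no lines for $d\ge 2$, the conclusion stands; you should also note $sW\ne W$, which holds since a very general $W$ has trivial stabilizer). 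But the proof as written has a genuine gap, and you name it yourself: everything funnels through the claim that for very general $W$ the curves $W\cap sW$ are irreducible --- or at least acquire no $s$-independent component --- as $s$ runs over the unknown curve $\Gamma\subseteq T$. This is not an off-the-shelf fact, and the quantification is delicate: ``very general'' only lets you avoid countably many proper closed subsets of $|\Oo_{\PP^3}(d)|$, whereas here the bad locus is a condition on pairs $(W,s)$ with $s$ ranging over a three-dimensional torus, and for a fixed $W$ there may well exist special $s$ with $W\cap sW$ reducible; one would need an incidence-variety dimension count showing the projection of the bad pairs to the $W$-space is small, which is exactly the argument you defer to ``the crux.'' The concluding parameter count (``most naturally a dimension count showing\dots'') is likewise only gestured at. So the skeleton is promising but the theorem is not proved.

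It is worth knowing that the paper avoids your hard step entirely by a specialization trick you did not consider: instead of multiplying by a second general point of $X$, multiply by a point $Q\in Y\cap H_i$ (nonempty since $Y$ is a curve and $H_i$ a hyperplane). Multiplication by such a $Q$ is a degenerate diagonal map landing in $H_i$, so $X\star Q\subseteq W\cap H_i$; for general $W$ each coordinate-plane section $W\cap H_i$ is a \emph{smooth irreducible plane curve} of degree $d$, so $X\star Q=W\cap H_i$ and hence $d_1=1$, with $X$ projectively equivalent to a plane section of $W$; symmetrically $d_2=1$ and $Y$ is projectively equivalent to $X$. The only irreducibility input is the smoothness of four fixed plane sections of a general surface --- trivial --- and the proof then closes with an explicit count: quadruples (plane, plane, degree-$d$ plane curve, degree-$d$ plane curve) depend on $3+3+2\left(\binom{d+2}{2}-1\right)$ parameters, which is less than $\binom{d+3}{3}-1$ for $d\ge 5$, and for $d=4$ the projective-equivalence constraint ($\dim PGL(3)=8$) brings the count down to $28<34$. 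In short: where you try to force the translated curves to exhaust a moving intersection $W\cap sW$, the paper degenerates the multiplier into a coordinate hyperplane so that the translated curve is trapped in a plane section, pinning down $X$ and $Y$ completely before any dimension count is needed.
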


``Very general ''is more restrictive than ``general'' and it is the only place in which we use that $\CC$ is uncountable. All the other results of this paper are true without any modification of their proof for an arbitrary algebraically closed field of characteristic $0$. Recall that the parameter space of the set of all  degree $d$ surfaces of $\PP^3$ is a projective space $|\Oo_{\PP^3}(d)|$ of dimension $\binom{d+3}{3}-1$. Very general implies that it is Zariski dense in $|\Oo_{\PP^3}(d)|$. Thus to prove Theorem \ref{i4} it is sufficient to prove the existence of a pair $(\Uu_d, \phi)$, where $\Uu_d$ is a quasi-projective variety, $\phi : \Uu_d\to |\Oo_{\PP^3}(d)|$ is a morphism, $\dim \Uu_d\le \binom{d+3}{3}-2$ and any degree $d$ surface of the form $X\star Y$ for some curves $X, Y$ is contained in $\mathrm{Im}(\phi)$. Being very general also means that its complement is a countable union of subvarieties of dimension at most $ \binom{d+3}{3}-2$. Thus for any probability measure on $|\Oo_{\PP^3}(d)|$ locally equivalent to the Lebesgue measure this complement has measure $0$. Thus with high probability a random degree $d\ge 4$ surface of $\PP^3$ is not a star product of $2$ curves.

Concerning the joins with a curve we prove the following results (see \cite[Cor. 1.4 and Remark 1.6]{a} for the classical case of joins and secant varieties).

\begin{theorem}\label{i1}
Let $X\subset \PP^n$ be an integral variety such that $X\nsubseteq \Delta_{n-1}$. Let $Y\subset \PP^n$ be an integral curve such that $Y\cap \Delta_{n-1} \ne \emptyset$
and $Y$ is not contained in a binomial hypersurface in the sense of \cite[Ch. 8]{BC24}. Then $\dim X\star Y = \min \{n,\dim X+1\}$.
\end{theorem}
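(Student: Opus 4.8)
The plan is to pass to the open dense torus of $\PP^n$, on which the Hadamard product is literally the group law, and then to read off the dimension of a product of two subvarieties of a torus from a stabilizer subgroup. The curve hypothesis on $Y$ is what collapses the whole analysis into a clean dichotomy.

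First I would set $U_0:=\PP^n\setminus \Delta_{n-1}$ and identify it with the torus $G:=(\CC^*)^n$ via $[x_0:\cdots :x_n]\mapsto (x_1/x_0,\dots ,x_n/x_0)$. On $U_0\times U_0$ the Hadamard product becomes the multiplication of $G$, since the $i$-th affine coordinate of $P\star Q$ is $p_iq_i/(p_0q_0)=(p_i/p_0)(q_i/q_0)$. Because $X\nsubseteq \Delta_{n-1}$, the set $A:=X\cap U_0$ is dense in $X$ and a closed irreducible subvariety of $G$ with $\dim A=\dim X=:d$. Since $Y$ is contained in no binomial hypersurface it is in particular not contained in any coordinate hyperplane $H_i$ (a coordinate hyperplane being the simplest such; in any case this is what one must check), so, $Y$ being irreducible, $Y\nsubseteq \Delta_{n-1}$; observe that an irreducible projective curve cannot lie inside the affine variety $U_0$, which is precisely why the stated condition $Y\cap \Delta_{n-1}\ne\emptyset$ holds automatically. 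Thus $B:=Y\cap U_0$ is dense in $Y$, closed and irreducible in $G$, of dimension $1$, and $X\star Y$ is the closure in $\PP^n$ of $A\cdot B$; as the dimension is unchanged, it suffices to compute $\dim\overline{AB}$ with the closure taken in $G$.

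I would first dispose of the case $d=n$, where $X=\PP^n$ and $X\star Y=\PP^n$ trivially, so assume $d\le n-1$; then $\min\{n,d+1\}=d+1$. Since $A b_0\subseteq \overline{AB}$ for a fixed $b_0\in B$ we have $\dim\overline{AB}\ge d$, while the general bound gives $\dim\overline{AB}\le d+1$, so the only possibilities are $d$ and $d+1$. Let $S:=\op{Stab}_G(A)=\{g\in G:\ gA=A\}$. The key claim is that $\dim\overline{AB}=d$ if and only if $B$ is contained in a single coset $Sb_0$: if $B\subseteq Sb_0$ then $AB\subseteq A S b_0=Ab_0$, forcing equality of dimensions; conversely, if $\dim\overline{AB}=d$ then each irreducible $Ab$ (for $b\in B$) is a $d$-dimensional closed subvariety of the $d$-dimensional irreducible $\overline{AB}$, hence $Ab=\overline{AB}=Ab_0$, so $bb_0^{-1}\in S$ for all $b\in B$, i.e. $B\subseteq Sb_0$. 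It then remains to exclude $B\subseteq Sb_0$. Here $S$ is \emph{proper}: a $G$-stable irreducible closed subvariety of $G$ must be all of $G$, so $S=G$ would force $A=G$ and $X=\PP^n$, against $d\le n-1$. As $S$ is a proper closed subgroup its character-group quotient is nontrivial, so some character $u^a$ with $a\in \ZZ^n\setminus\{0\}$ is constant, say equal to $c\ne 0$, on $Sb_0$; clearing denominators turns $u^a=c$ into a genuine binomial equation $x^{\alpha}=c\,x^{\beta}$ with $\alpha\ne\beta$ and $c\ne 0$, satisfied on $B$ and hence on $Y=\overline{B}$. Thus $B\subseteq Sb_0$ would place $Y$ in a binomial hypersurface, contrary to hypothesis, and therefore $\dim(X\star Y)=d+1=\min\{n,d+1\}$.

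The step I expect to be the main obstacle is making this last paragraph precise against the definition of binomial hypersurface in \cite[Ch. 8]{BC24}: one must verify that the closure in $\PP^n$ of a coset of a \emph{proper} subgroup of the torus is genuinely contained in a hypersurface cut out by a binomial in their sense, and confirm that $S$ is proper. I would also emphasize that the curve assumption on $Y$ is exactly what produces the two-valued dichotomy $\dim\overline{AB}\in\{d,d+1\}$; for higher-dimensional $Y$ the intermediate degenerations would require controlling all subtori through which $Y$ could be forced, not merely whether $Y$ fits inside one coset.
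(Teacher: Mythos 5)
Your proof is correct, and its skeleton coincides with the paper's: both reduce, after disposing of the trivial case $X=\PP^n$ and using irreducibility of $X\star Y$ together with the dichotomy $\dim X\star Y\in\{\dim X,\dim X+1\}$, to showing that the translates $X\star Q$, $Q\in Y$, cannot all coincide. Where you genuinely diverge is at that key step: the paper outsources it, citing \cite[Th.~5.3]{ABCDHRVWY24} and \cite[Th.~7.4]{BC24} (via its Remark~\ref{p1} on the saturation $\II[X]:(x_0\cdots x_n)=\II[X]$) to conclude $X\star P\ne X\star Q$ for general $P,Q\in Y$, whereas you open the black box and prove it from scratch: constancy of the translates forces $B=Y\cap G$ into a single coset of the stabilizer $S=\op{Stab}_G(X\cap G)$; $S$ is a proper closed subgroup of the torus since a $G$-stable $A$ would give $X=\PP^n$; and every coset of a proper closed subgroup lies in a binomial hypersurface, because closed subgroups of a torus are intersections of kernels of characters, and homogenizing a character equation $u^a=c$ yields a genuine binomial $x^{\alpha}-cx^{\beta}$. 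This buys self-containedness and makes completely transparent how the binomial hypothesis enters; it also uniformly covers a finite or non-connected stabilizer, a real subtlety, since a single stabilizing torsion translation does \emph{not} force binomial containment (so the bare statement ``$X\star P=X\star Q$ for one pair $P\ne Q$ implies binomial containment'' would be false, and the family along the curve, or the character argument, is genuinely needed). You also correctly diagnose the hypotheses: $Y\cap\Delta_{n-1}\ne\emptyset$ is automatic for a projective curve; the operative condition is $Y\nsubseteq\Delta_{n-1}$, which does follow from the binomial hypothesis (e.g. $Y\subseteq H_0$ would place $Y$ inside the binomial hypersurface $V(x_0x_1-x_0x_2)$); and the hypothesis must be applied to $Y$, not to $X$ as the paper's proof inadvertently writes in its final sentence. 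The paper's route is shorter but leans on the cited results for exactly the content you prove directly.
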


\begin{corollary}\label{i01}
Let $Y\subset  \PP^n$ be a curve such that $Y\nsubseteq \Delta_{n-1}$ and $Y$ is not contained in a binomial hypersurface. Then all Hadamard secant products of $Y$ have the expected dimension.
\end{corollary}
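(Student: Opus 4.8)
The plan is to prove the corollary by induction on the number $k$ of factors, writing $Y^{\star k} := Y \star \cdots \star Y$ ($k$ times) for the $k$-th Hadamard power, so that the Hadamard secant products of $Y$ are precisely the varieties $Y^{\star k}$. Since $\dim Y = 1$, the expected dimension of $Y^{\star k}$ is $\min\{n,k\}$, and the assertion is that $\dim Y^{\star k} = \min\{n,k\}$ for all $k\ge 1$. The base case $k=1$ is immediate. For the inductive step I would set $X := Y^{\star k}$ and apply Theorem \ref{i1} to the pair $(X,Y)$ in order to compute $\dim Y^{\star(k+1)} = \dim(X\star Y)$.

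To invoke Theorem \ref{i1} one must verify its three hypotheses at each stage of the induction. Two of them concern only the fixed curve $Y$: the hypothesis that $Y$ not lie in a binomial hypersurface is assumed in the corollary, while the condition $Y\cap\Delta_{n-1}\ne\emptyset$ is automatic, since $Y$ is a curve and every positive-dimensional projective variety meets the hypersurface $\Delta_{n-1}=H_0\cup\cdots\cup H_n$. The only hypothesis that must be propagated through the induction is $X=Y^{\star k}\nsubseteq\Delta_{n-1}$.

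The key step, and the main (if mild) obstacle, is therefore to show that $Y^{\star k}\nsubseteq\Delta_{n-1}$. Because $Y^{\star k}$ is integral (Hadamard products of integral varieties are integral, as recalled in the introduction), the inclusion $Y^{\star k}\subseteq\Delta_{n-1}$ would force $Y^{\star k}\subseteq H_i$ for some $i$; equivalently, $Y^{\star k}\nsubseteq\Delta_{n-1}$ follows once $Y^{\star k}\nsubseteq H_i$ for every $i$. By the same irreducibility argument applied to $Y$, the hypothesis $Y\nsubseteq\Delta_{n-1}$ gives $Y\nsubseteq H_i$ for all $i$, so one may fix a single point $Q=[q_0:\cdots:q_n]\in Y$ with all coordinates nonzero. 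Then $Q^{\star k}=[q_0^{\,k}:\cdots:q_n^{\,k}]\in Y^{\star k}$ again has all coordinates nonzero, whence $Q^{\star k}\notin H_i$ and so $Y^{\star k}\nsubseteq H_i$ for every $i$. This yields $Y^{\star k}\nsubseteq\Delta_{n-1}$, and in passing shows that $Y^{\star k}\star Y$ is well-defined.

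With all hypotheses checked, Theorem \ref{i1} gives
$$\dim Y^{\star(k+1)} = \dim\bigl(Y^{\star k}\star Y\bigr) = \min\{n,\dim Y^{\star k}+1\} = \min\{n,\min\{n,k\}+1\} = \min\{n,k+1\},$$
which completes the induction and hence establishes that every $Y^{\star k}$ has the expected dimension.
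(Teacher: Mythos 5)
Your proposal is correct and takes essentially the same route as the paper: the paper's one-line proof of Corollary \ref{i01} consists exactly of the observation that each Hadamard secant product $X=Y^{\star k}$ satisfies $X\nsubseteq \Delta_{n-1}$, so that Theorem \ref{i1} applies inductively. Your write-up simply makes explicit the details the paper leaves implicit (the point $Q^{\star k}$ with all coordinates nonzero witnessing $Y^{\star k}\nsubseteq \Delta_{n-1}$, and the automatic fact that the curve $Y$ meets the hypersurface $\Delta_{n-1}$), all of which check out.
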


\section{The proofs}

\begin{proof}[Proof of Theorem \ref{i2}:]
First assume $k=2$. Set $m:= \dim X_1$. Fix a general $(P,Q) \in X_1\times X_2$ and any $g\in PGL(n+1)$ such that $g(P)\notin \Delta_{n-1}$. Since $Q$ is general in $X_2$ and $X_2\nsubseteq \Delta_{n-1}$, $Q\notin \Delta_{n-1}$. Thus $g(P)\star Q$ is well-defined and $g(P)\star Q\notin \Delta_{n-1}$. The point $g(P)\star Q$ is a general point of $g(X_1)\star X_2$. It is sufficient to prove that for a general $g\in PGL(n+1)$ the tangent space of $g(X_1)\star X_2$ at $g(P)\star Q$ has dimension $\min \{n,\dim X+1\}$. For any $g$ such $g(P)\notin \Delta_{n-1}$
the tangent space of $g(X_1)\star X_2$ at $g(P)\star Q$ is the two linear spaces spanned by  $T_{g(P)}g(X_1)\star Q$ and $P\star T_QX_2$, two linear spaces passing through $g(P)\star Q$ and of dimension $\dim X_1$ and $X_2$, respectively (\cite[Lemma 1.6]{BC24}).  

Let $G(m+1,n+1)$ denote the Grassmannian of all $m$-dimensional linear subspaces of $\PP^n$. Let $G(m+1,n+1;P)$ (resp. $G(m+1,n+1;P\star Q)$)
denote the Grassmannian of all $m$-dimensional linear subspaces of $\PP^n$ containing $P$ (resp. $P\star Q$). $G(m+1,n+1;P)$ is an irreducible projective variety of dimension $mn$ isomorphic to the variety $G(m,n)$.  Since $Q\notin \Delta_{n-1}$, the map
$V\mapsto V\star Q$ induces an isomorphism between $G(m+1,n+1;P)$ and $G(m+1,n+1;P\star Q)$. 
Let $G_P$ (resp. $G_{P\star Q}$) denote the set of all $g\in PGL(n+1)$ such that $g(P) =P$ (resp. $g(P\star Q) =g(P\star Q)$).
Since  $Q\notin \Delta_{n-1}$, $G_P$ and $G_{P\star Q}$ are isomorphic algebraic groups. The group $G_P$ (resp. $G_{P\star Q}$) is irreducible and it acts transitively
on the variety $G(m+1,n+1;P)$ (resp. $G(m+1,n+1;P\star Q)$). Note that $g(T_PX_1)=T_{g(P)}g(X_1)$ for all $g\in G_P$. The transitivity of $G_P$ mean that for a general $V_1\in G(m+1,n+1;P)$ there is $g\in G_P$ such that $g(T_pX_1)=V_1$. Note that $V_1\star Q$ is a general element of $G(m+1,n+1;P\star Q)$. Since $P\star T_QX_2$ is a fixed linear subspace
of $\PP^n$ containing $P\star Q$, $V_1\star Q$ and $P\star T_QX_2$ span a linear subspace of dimension $\min \{n,\dim X_1+\dim X_2\}$, concluding the proof for $k=2$.

Now assume $k>2$ and that the result is true for the integer $k-1$. Fix $g_i\in PGL(n+1)$, $2\le i\le k-1$, and set $W:= g_2(X_2)\star\cdots \star X_k$. The closed set $W$ is irreducible and $W\nsubseteq \Delta_{n-1}$. By the inductive assumption we have $\dim W =\min \{n,\dim X_2+\cdots +X_k\}$.
Thus the case $k=2$ gives $\dim g_1(X_1)\star\cdots \star g_{k-1}(X_{k-1})\star X_k =\min \{n,\dim X_1+\cdots +\dim X_k\}$.
\end{proof}

\begin{proof}[Proof of Theorem \ref{i3}:]
We have $g_k(X_k)\nsubseteq \Delta _{n-1}$ for a general $g_k$ and hence Theorem \ref{i3} follows from Theorem \ref{i2}.
\end{proof}

\begin{remark}\label{rel0}
Take $X_1,\dots ,X_k$ as in Theorem \ref{i2} and \ref{i3}. Assume that each inclusion $X_i\subset \PP^n$ is defined over $\RR$, i.e. the homogeneous ideal $\II[X_i]$ of $X_i$ is generated by homogeneous polynomials with real coefficients. The real algebraic group $SL(n+1,\RR)$ and its image in the complex projective group $PGL(n+1)$ is Zariski dense.
Thus in the set-up of Theorem \ref{i2} and Theorem \ref{i3}) we may take all $g_h$ with real coefficient. However, the real locus $(X_1\star \cdots \star X_k)(\RR)$ may be larger that the Hadamard product of the real loci $X_i(\RR)\subset \PP^n(\RR)$. However, if each $X_i$ had a smooth point, then each $X_i(\RR)$ contains a real differential manifold of dimension $\dim X_i$ and the Hadamard product of all sets $g_i(X(\RR))$ contains a differential manifold of dimension $\min \{n,\dim X_1+\cdots +\dim X_k\}$.
\end{remark}
\begin{proof}[Proof of Theorem \ref{i4}:]
Assume $W =X\star Y$ for some curves $X, Y$. If $X\subset H_i$ for some $i$, then $X\star Y\subseteq H_i$ for all $Y$. Thus we may assume
$X\cap \Delta_2\ne \emptyset$ and $Y\cap \Delta_2\ne \emptyset$. Set $X_0:= X\setminus X\cap \Delta_2$ and $Y_0:= Y\cap \Delta_2$.

Since $W$ is general, we may assume $W\cap \{[1:0:0:0],[0:1:0:0:0],[0:0:1:0],[0:0:0:1]\}=\emptyset$, that each $W\cap H_i$, $0\le i\le 4$, is a smooth plane curve and that each set $W\cap H_i\cap H_j$, $0\le i<j\le 3$, is formed by $d$ distinct points.

Since $d\ge 4$ and $W$ is general, the Noether-Lefschetz theorem (\cite[\S 5.3.3, Example at p. 150]{Voi07}) gives that any curve contained in $W$ is the complete intersection of $W$ with another surface and in particular its degree is divisible by $d$. The family $P\star Y$, $P\in X_0$ , (resp. $X\star Q$, $Q\in Y_0$) is an irreducible $1$-dimensional family of curves contained in $W$ and projectively equivalent to $Y$ (resp. $X$).
Thus there are integers $d_1$ and $d_2$ such that $\deg (X)=d_1d$ and $\deg(Y) =d_2d$, $X$ is projectively equivalent to the complete intersection of $W$ and a degree $d_1$ surface
and $Y$ is projectively equivalent to the complete intersection of $W$ and a degree $d_2$ surface.

Fix $i\in \{0,1,2,3\}$. Since $H_i$ is a hyperplane, $X\cap H_i\ne \emptyset$ and $Y\cap H_i\ne \emptyset$. Fix $Q_i\in H_i\cap Y$. Note that $X_0\star Q$ is a well-defined quasi-projective curve projectively equivalent to $X_0$. By definition $X\star Q$ is the closure of $X_0\star Q$ in $\PP^3$. Since $Q\in H_i$, $X\star Q\subset H_i$.
Thus $X\star Q$ is the smooth curve $W\cap H_i$ and in particular $d_1=1$, $X$ is smooth and $X$ is projectively equivalent to $W\cap H_i$. Taking $P\in X\cap H_i$
we get $d_2=1$, $P\star Y=W\cap H_i$ and that $X$ and $Y$ are projectively equivalent. 
First assume $\ge 5$. Consider the following parameter space $\Uu_d$, An element of $\Uu_d$ is a quadruple $(M_X,M_Y,X,Y)$, where $M_X$ and $M_Y$ are plane curves $X\subset M_X$ is an irreducible degree $d$ plane curve such that $X\nsubseteq \Delta_2$ and $Y\subset M_Y$ is a degree $d$ plane curve such that $Y\nsubseteq \Delta_2$. The map
$\phi: \Uu_d\to |\Oo_{\PP^3}(d)|$ is the map $(M_X,M_Y,X,Y)\mapsto X\star Y$. All possible $W$'s must be elements of $\phi(\Uu_d)$. The set of all planes of $\PP^3$ is an irreducible variety of dimension $3$. The set of all degree $d$ plane curves is a projective space of dimension $\binom{d+2}{2}-2$. Thus $\dim \Uu_d =3+3+(d+2)(d+1) -2< \binom{d+3}{3}-1$. 

Now assume $d=4$. Instead of $\Uu_4$ we take as a parameter space the variety $\Uu \subset \Uu_4$ formed by all quadruples $(M_X,M_Y,X,Y)$ with $Y$ projectively equivalent to $X$. Since $\dim PGL(3) =8$, we have $\dim \Uu = 3+3+14+8< 35-1$.
\end{proof}
\begin{remark}\label{p1}
Let $V\subsetneq \PP^n$, $V\ne \emptyset$, be an irreducible variety such that $V\nsubseteq \Delta_{n-1}$. Let $\II[V]\subseteq  \KK[x_0,\dots ,x_k]$ denote the ideal generated by all homogeneous polynomial vanishing on $V$. Since $V$ is irreducible, the ideal $\II[V]$ is contained in the inessential ideal $(x_0,\dots ,x_n)$ and it  is saturated. The homogeneous ideal is prime (\cite[Th. 8.5]{CLO}). Since $V\nsubseteq \Delta_{n-1}$ and $V$ is irreducible, $V\nsubseteq H_i$ for any $i$. Since $\II[V]$ is a prime ideal, $\II[V]:(x_i) =\II[V]$ for all $i$. Hence the assumption $\II[V]:(x_0\cdots x_n)=\II[V]$
of \cite[Th. 5.3]{ABCDHRVWY24} is satisfied by the variety $V$.
\end{remark}

\begin{proof}[Proof of Theorem \ref{i1}:]
Since $\dim Y=1$, we have $\dim X\star Y\le \min \{n,\dim X+1\}$ (\cite[p. 4]{BC24}).

Fix general $(P,Q)\in Y\times Y$. Hence $P\notin \Delta_{n-1}$, $Q\notin \Delta_{n-1}$ and $P\ne Q$. Obviously, $X\times P$ and $X\times Q$ are projectively equivalent to $X$ and in particular they have dimension $\dim X$. Thus we may assume $X\ne \PP^n$. With this assumption it is sufficient to prove that $\dim X\star Y >\dim X$.
 Since $X\star Y$ is irreducible and $\dim X\star Y\le \dim X+1$, it is sufficient to prove that $X\star P\ne X\star Q$. Fix $P$. Since $Q\notin \Delta_{n-1}$, the point $P^{-1}\in \PP^n\setminus \Delta_{n-1}$ is defined and use that $X\star P\ne X\star Q$ for a general $Q\in Y$, because $X$ is not contained in a binomial hypersurface and we may apply Remark \ref{p1} (\cite[Th. 5.3]{ABCDHRVWY24},
 \cite[Th. 7.4]{BC24}).
\end{proof}

\begin{proof}[Proof of Corollary \ref{i01}:]
It is sufficient to observe that $X\nsubseteq \Delta_{n-1}$ for any secant variety $X$ of $Y$.
\end{proof}

The author declares no conflict of interest.

\end{document}